    \newtheorem{theorem}{Theorem}[section]
    \newtheorem{proposition}[theorem]{Proposition}
    \newtheorem{corollary}[theorem]{Corollary}
    \newenvironment{remark}[1][Remark]{\begin{trivlist}
    \item[\hskip \labelsep {\bfseries #1}]}{\end{trivlist}}
\newcommand{\inc}{\in}
\newcommand{\PSO}{\mathrm{PSO}^-}
\newcommand{\PO}{\mathrm{P} \Omega ^-}
\begin{document}

%\begin{frontmatter}

\title{A note on relative hemisystems of Hermitian generalised quadrangles}

%% Group authors per affiliation:
\author[uwa]{John Bamberg}
\address[J. Bamberg and M. Lee]{
Centre for the Mathematics of Symmetry and Computation\\
School of Mathematics and Statistics\\
The University of Western Australia\\
35 Stirling Highway, Crawley, WA 6009, Australia.}
\email[J. Bamberg]{john.bamberg@uwa.edu.au}
\author[uwa]{Melissa Lee}
\email[M. Lee]{20945795@student.uwa.edu.au}

\author[wm]{Eric Swartz}

\address[E.Swartz]{
Department of Mathematics\\
College of William and Mary\\
P.O. Box 8795
Williamsburg, VA 23187-8795, USA.}
\email[E. Swartz]{easwartz@wm.edu}
%\begin{keyword}
%Hermitian variety, classical polar space, relative hemisystem
%\MSC[2010]{11E39, 51A50, 51E12}
%\end{keyword}
\maketitle

\begin{abstract}
In this paper we introduce a set of sufficient criteria for the construction of relative hemisystems of the Hermitian space $\mathrm{H}(3,q^2)$, unifying all known infinite families. We use these conditions to provide new proofs of the existence of the known infinite families of relative hemisystems. Reproving these results has allowed us to find new relative hemisystems closely related to an infinite family of Cossidente's, and develop techniques that are likely to be useful in finding relative hemisystems in future.                                                                                                                                                                                                                                                                                                                                                                                                                                                                                                                                                                                                                                                                                                                                                                                                                                                                                                                                                                                                                                                                                                                                                                                                                                                                                                                                                                                                                                                 
\end{abstract}

%\begin{keyword}
%\texttt{elsarticle.cls}\sep \LaTeX\sep Elsevier \sep template
%\MSC[2010] 00-01\sep  99-00
%\end{keyword}

%\end{frontmatter}

\section{Introduction}

Hemisystems in finite geometry have a short but eventful history, dating back to B. Segre's definition of them as a special case of a regular system in 1965 \cite{MR0213949}. 
A hemisystem on the Hermitian space $\mathrm{H}(3,q^2)$, $q$ odd, is a set of lines $\mathcal{L}$ on $\mathrm{H}(3, q^2)$ such that every point in $\mathrm{H}(3, q^2)$ has half of the lines incident with it in $\mathcal{L}$. Hemisystems are of great interest 
because they give rise to partial quadrangles, strongly regular graphs and cometric $Q$-antipodal association schemes \cite[\textsection 7.5.1]{MR3092674}. In his 1965 treatise, Segre gave an example of a hemisystem on $\mathrm{H}(3, 3^2)$ and proved that
it was unique (up to equivalence) on this Hermitian space \cite{MR0213949}.
For forty years after their introduction, no new examples of hemisystems were found, and Thas conjectured that none existed on $\mathrm{H}(3,q^2)$ for $q >3$ \cite[\textsection 9.5]{buekenhout1995handbook}. %REFERENCE RIGHT?
This conjecture was proven false when Penttila and Cossidente discovered a new infinite family of hemisystems in 2005 \cite{cossidente2005hemisystems}. 

In 2011, Penttila and Williford introduced the notion of 
\textit{relative hemisystems}, an analogous concept to hemisystems that exist on $\mathrm{H}(3,q^2)$, for $q$ even \cite{penttila2011new}. They were motivated by the desire to construct an example of an infinite family of primitive cometric association schemes that do not arise from distance regular graphs. 
Prior to their paper, only sporadic examples of such association schemes were known \cite[\textsection 1]{penttila2011new}. 

Let $Q$ be a generalised quadrangle of order $(q^2, q)$, containing a generalised quadrangle $Q'$ of order $(q,q)$, where $q$ is a power of two. 
Each of the lines in $Q$ meet $Q'$ in either $q+1$ points or are disjoint from it. A subset $\mathcal{R}$ of the lines in $Q \setminus Q'$ is a \textit{relative hemisystem} of $Q$ with respect to $Q'$ if
for every point $P$ in $Q \setminus Q'$, exactly half the lines through $P$ disjoint from $Q'$ lie in $\mathcal{R}$. Notice that this definition is well defined, because the number of lines through $P$ disjoint from $Q'$ is $q$.

Penttila and Williford concluded their paper with an open question on the existence of non-isomorphic relative hemisystems
on $\mathrm{H}(3,q^2)$, $q$ even. Cossidente resolved this question two years later by constructing an infinite family of non-isomorphic relative hemisystems,
each admitting $\mathrm{PSL}(2,q)$ as an automorphism group \cite{MR3081646}, and another admitting a group of order $q^2(q+1)$, for each $q\geqslant 8$, a power of two \cite{cossidente2013new}. In 2014, Cossidente and Pavese constructed a relative hemisystem arising from a Suzuki-Tits ovoid on $\mathrm{H}(3,64)$ \cite{MR3252665}. They conjectured that this relative hemisystem is sporadic.

Cossidente \cite{MR3081646} shows that his construction generates a new infinite family by showing that he has created more relative hemisystems than the number generated by the Penttila-Williford construction. Through finding all of the relative hemisystems invariant under $\mathrm{PSL}(2,q)$ for varying values of $q$, we have shown that there are actually several more inequivalent infinite families of relative hemisystems that arise from this construction (see Remark \ref{remark:Cossidente}). For $q = 16$, we found by computer that there are five inequivalent relative hemisystems that each admit $\mathrm{PSL}(2, q)$, and we conjecture that the number of inequivalent infinite families increases with $q$.

In this paper, we state a set of new sufficient criteria for relative hemisystems, which unifies all currently known infinite families of relative hemisystems.
We begin with some background, and briefly recall the constructions of the three known infinite families. We go on to prove a series of results, culminating in 
Theorem \ref{majortheorem} which states sufficient conditions to determine a relative hemisystem. Using these, we provide new explicit proofs of the Penttila-Williford and Cossidente results.
Finally, we briefly discuss a computational result classifying all of the relative hemisystems on $\mathrm{H}(3,4^2)$, and some results partially classifying the relative hemisystems on $\mathrm{H}(3,8^2)$.

\section{Background information}
A finite generalised quadrangle of order $(s,t)$ is an incidence structure of points and lines such that:
\begin{itemize}
\item Any two points are incident with at most one line.
\item Every point is incident with $t+1$ lines.
\item Every line is incident with $s+1$ points.
\item For any point $P$ and line $\ell$ that are not incident, there is a unique point $P'$ on $\ell$ that is collinear with $P$.
\end{itemize}

If we take the point-line dual of a generalised quadrangle of order $(s,t)$, we obtain another generalised quadrangle, of order $(t,s)$.

There are many examples of generalised quadrangles; for a good reference for the finite cases, see \cite{payne2009finite}. 
For the purposes of this paper, we are interested in two families of generalised quadrangles.
\begin{itemize}
 \item \textbf{$\mathrm{H}(3,q^2)$}, the set of all totally isotropic lines and points with respect to a Hermitian form on the projective space $\mathrm{PG}(3,q^2)$. These are generalised quadrangles of order $(q^2, q)$, for $q$ a prime power.
 \item \textbf{$\mathrm{W}(3,q)$}, the set of all totally isotropic points and lines of a symplectic form on $\mathrm{PG}(3,q)$. These are generalised quadrangles of order $(q,q)$, for $q$ a prime power.
\end{itemize}
In this paper we will only work with fields with even characteristic, even though some results may hold for odd prime powers. For this reason, from now on we will assume that $q = 2^k$ for some positive integer $k$.

For a given $q$, we can find an embedding of $\mathrm{W}(3,q)$ in $\mathrm{H}(3,q^2)$. For a detailed description of the construction of this embedding, the reader is directed to \cite[Section 4.5]{kleidman1990subgroup} and our construction in Section \ref{newproofs}. 
This gives us the setup needed for the construction of a relative hemisystem. When constructing a relative hemisystem,
we are only concerned with the set of points of $\mathrm{H}(3,q^2)$ which are outside of $\mathrm{W}(3,q)$ and the lines of $\mathrm{H}(3,q^2)$ disjoint from $\mathrm{W}(3,q)$. 
For conciseness, we call the former the set of \textit{external points},
denoted $\mathcal{P}_E$, and the latter the set of \textit{external lines}, denoted $\mathcal{L}_E$.

Throughout the paper, we will also make reference to the collineation group of the Hermitian space $\mathrm{H}(3,q^2)$, which shall be denoted as the projective unitary group $\mathrm{P} \Gamma \mathrm{U}(4,q)$.
We use $\Omega / G$ to denote the set of orbits of a set $\Omega$ under a group $G$. Also recall that if $N$ is a normal subgroup of $G$, then $G$ acts on $\Omega / N$ in its action on sets.
We will say that a group $G$ acts \textit{semiregularly} in its action on a set $\Omega$ if the only elements in $G$ that fix an element of $\Omega$ are those in the kernel of the action. Note that definition of semiregular may be different to definitions in other areas of the literature. 

Quadrics are also integral to many of the results in this paper. Quadrics in a three dimensional projective space are the totally singular points and 
lines (if they exist) of a quadratic form. In $\mathrm{PG}(3,q)$, there are two sorts of non-singular quadrics -- hyperbolic and elliptic.
Hyperbolic quadrics, denoted $\mathrm{Q}^+(3,q)$ are quadrics which contain totally singular lines. Otherwise, the quadric is elliptic, denoted $\mathrm{Q}^-(3,q)$. For further background on quadrics, see \cite[\S22]{MR1363259}.

Suppose $q$ is even and $\mathcal{Q}^+$ is an irreducible hyperbolic quadric of $\mathrm{PG}(3,q^2)$ that shares a tangent plane with $\mathrm{H}(3,q^2)$ at a common point. Then the intersection of $\mathcal{Q}^+$ and $\mathrm{H}(3,q^2)$ has size $q^2+1$ and is an elliptic quadric $\mathrm{Q}^-(3,q)$ \cite{quadherm}. 
\section{The known families of relative hemisystems}
Here, we give brief descriptions of the constructions of the known infinite families of relative hemisystems.
\subsection{The Penttila-Williford relative hemisystems}
\label{PWRHs}
The first example of an infinite family of relative hemisystems, admitting $\mathrm{P} \Omega ^- (4,q)$ as an automorphism group, was given by Penttila and Williford in their paper introducing the concept \cite{penttila2011new}.
Their construction \cite[Theorem 5]{penttila2011new} considers the action of the normaliser of a Singer cycle of $\mathrm{P} \Omega^-(4,q)$ on $\mathrm{H}(3,q^2)$. They use it to prove that 
$\mathrm{P} \Omega^-(4,q)$, 
$q$ even, $q \geqslant 2$ has two orbits on external lines. It transpires that these orbits form two relative hemisystems, $H_1$ and $H_2$. They further show that there exists an involution $t$, which fixes the points of $\mathrm{W}(3,q)$ and switches $H_1$ and $H_2$ \cite{penttila2011new}.
\subsection{The Cossidente relative hemisystems}
\label{CRHs}
Apart from the Penttila-Williford family of relative hemisystems, the only other known infinite families of relative hemisystems are the two discovered by Cossidente \cite{MR3081646, cossidente2013new} in 2013.
 Both of these families are \textit{perturbations} of the Penttila-Williford relative hemisystems.

 The first family on $\mathrm{H}(3,q^2)$, $q$ even and $q>4$, admits the linear group $\mathrm{PSL}(2,q)$ as an automorphism group \cite{MR3081646}. Cossidente constructed it by taking the two Penttila-Williford
 relative hemisystems $H_1$ and $H_2$ and considering the stabiliser of a conic section of the elliptic quadric $\mathrm{Q}^-(3,q)$ in $\mathrm{W}(3,q)$ fixed by $\mathrm{P} \Omega^-(4,q)$. 
 This stabiliser is isomorphic to $\mathrm{PSL}(2,q)$ and 
 does not act transitively on $H_1$ and $H_2$. Cossidente then uses the involution $t$, switching $H_1$ and $H_2$ from the Penttila-Williford proof \cite[Theorem 5]{penttila2011new} 
 to delete some orbits of $H_1$ under $\mathrm{PSL}(2,q)$ and replace them with their images under $t$. Since the number of ways this can be done 
 outnumbers the number of Penttila-Williford relative hemisystems, Cossidente has constructed a new infinite family.

 The second infinite family of relative hemisystems discovered by Cossidente admits a group of order $q^2(q+1)$ for each $q$ even and $q>4$. 
 The construction of this infinite family is very similar to the last. Choose a point $P$
 of an elliptic quadric $\mathrm{Q}^-(3,q)$, which is an ovoid of $\mathrm{W}(3,q)$. Let $M$ be the subgroup of the stabiliser of $P$ in $\mathrm{P} \Omega^-(4,q)$ of order $q^2(q+1)$.
 Instead of orbits under $\mathrm{PSL}(2,q)$, Cossidente considers orbits of $H_1$ and $H_2$ under $M$, deleting orbits of $H_1$ and replacing them by their image under the involution $t$. 
 Since the number of relative hemisystems invariant under $M$ exceeds that of the Penttila-Williford relative hemisystems, Cossidente must have found another infinite family of relative hemisystems.
 
 %Notice that in both cases, since there are deletions and additions of orbits, the automorphism groups admitted by these families are not contained in $\PO(4,q)$. 
 
\section{New sufficient conditions for relative hemisystems}
\label{magic}
Let $\mathcal{Q}^+$ be a hyperbolic quadric which intersects a Hermitian space $\mathrm{H}(3,q^2)$ in an elliptic quadric isomorphic to $\mathrm{Q}^-(3,q)$. 
The stabiliser of $\mathcal{Q}^+$ in $\mathrm{P} \Gamma \mathrm{U} (4,q)$ is isomorphic to the orthogonal group $\mathrm{PSO}^-(4,q)$ \cite{penttila2011new}; 
and the subgroup of $\mathrm{P} \Gamma \mathrm{U}(4,q)_{\mathcal{Q}^+}$ that stabilises the two families of reguli of $\mathcal{Q}^+$ is 
isomorphic to $\mathrm{P} \Omega^-(4,q)$ \cite{penttila2011new}. Penttila and Williford prove that $\mathrm{P}\Omega ^-(4,q)$ has two orbits on external lines, and $\mathrm{PSO}^-(4,q)$ is transitive on
external lines.

Taking $\ell \in \mathcal{L}_E$, the size of the orbit of $\ell$ under $\mathrm{PSO}^-(4,q)$ is twice as large as the orbit of $\ell$ under $\mathrm{P}\Omega ^-(4,q)$. 
By the Orbit-Stabiliser Theorem, 
$|\mathrm{PSO}^-(4,q) : \mathrm{PSO}^-(4,q)_{\ell}| = 2|\mathrm{P}\Omega ^-(4,q): \mathrm{P}\Omega ^-(4,q)_{\ell}|$. 
Since $|\mathrm{PSO}^-(4,q):\mathrm{P} \Omega^-(4,q)| = 2$ \cite[Table 2.1d]{kleidman1990subgroup}, it follows that $\mathrm{PSO}^-(4,q)_{\ell} = \mathrm{P} \Gamma \mathrm{U}(4,q)_{\mathcal{Q}^+, \ell} = \mathrm{P}\Omega ^-(4,q)_{\ell}$. 
Therefore, the size of the orbit of $\ell^{\mathrm{P}\Omega ^-(4,q)}$ under $\mathrm{PSO}^-(4,q)$ is 
$$ |\mathrm{PSO}^-(4,q) : \mathrm{PSO}^-(4,q)_{\ell}\mathrm{P}\Omega^-(4,q)| = |\mathrm{PSO}^-(4,q) : \mathrm{P}\Omega^-(4,q)|  =2$$ and hence $\mathrm{PSO}^-(4,q) = \mathrm{P} \Gamma \mathrm{U}(4,q)_{\mathcal{Q}^+}$ 
acts semiregularly on the orbits of $ \mathrm{P}\Omega ^-(4,q)$ on external lines.

\begin{proposition}
\label{cond2prop}
  Let $G$ be a subgroup of $\mathrm{P} \Gamma \mathrm{U}(4,q)_{\mathcal{Q}^+}$, where $\mathcal{Q}^+$ is a hyperbolic quadric meeting $\mathrm{H}(3,q^2)$ in an elliptic quadric. 
  Let $D$ be the subgroup of $\mathrm{P} \Gamma \mathrm{U}(4,q)_{\mathcal{Q}^+}$ that stabilises the two families of reguli of $\mathcal{Q}^+$. If $G$ is not contained in $D$, then $G$ acts semiregularly on the orbits of $G\cap D$ on external lines.
\end{proposition}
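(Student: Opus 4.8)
The plan is to exploit the two facts established in the paragraph preceding the proposition: that $D\cong \mathrm{P}\Omega^-(4,q)$ has exactly two orbits $H_1,H_2$ on the external lines $\mathcal{L}_E$, and that $\mathrm{P}\Gamma\mathrm{U}(4,q)_{\mathcal{Q}^+}\cong \mathrm{PSO}^-(4,q)$ acts semiregularly on $\{H_1,H_2\}$. Since $D$ has index $2$ in $\mathrm{P}\Gamma\mathrm{U}(4,q)_{\mathcal{Q}^+}$, semiregularity here amounts to the statement that every element of $\mathrm{P}\Gamma\mathrm{U}(4,q)_{\mathcal{Q}^+}\setminus D$ interchanges $H_1$ and $H_2$.

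First I would record the elementary group theory. As $D$ has index $2$ in $\mathrm{P}\Gamma\mathrm{U}(4,q)_{\mathcal{Q}^+}$ it is normal there, so $N:=G\cap D$ is normal in $G$ and the action of $G$ on $\mathcal{L}_E/N$ is defined. Because $G\not\leqslant D$, the restriction to $G$ of the quotient map $\mathrm{P}\Gamma\mathrm{U}(4,q)_{\mathcal{Q}^+}\to \mathrm{P}\Gamma\mathrm{U}(4,q)_{\mathcal{Q}^+}/D\cong C_2$ is surjective; hence $[G:N]=2$ and $G\setminus D\subseteq \mathrm{P}\Gamma\mathrm{U}(4,q)_{\mathcal{Q}^+}\setminus D$. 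In particular every element of $G\setminus D$ swaps $H_1$ and $H_2$.

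Next I would argue that no element of $G\setminus D$ fixes an orbit of $N$. Let $\mathcal{O}$ be an orbit of $N$ on $\mathcal{L}_E$. Since $N\leqslant D$, the set $\mathcal{O}$ lies inside a single $D$-orbit, say $\mathcal{O}\subseteq H_1$ after relabelling. Take $g\in G\setminus D$. As $N$ is normal in $G$, the image $\mathcal{O}^g$ is again an orbit of $N$, and since $g$ maps $H_1$ onto $H_2$ we get $\mathcal{O}^g\subseteq H_2$; because $H_1\cap H_2=\varnothing$, this forces $\mathcal{O}^g\ne\mathcal{O}$. On the other hand, every element of $N$ fixes every $N$-orbit setwise, so $N$ is precisely the kernel of the action of $G$ on $\mathcal{L}_E/N$. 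Combining the last two statements, the elements of $G$ that fix a point of $\mathcal{L}_E/N$ are exactly those lying in the kernel, which is the assertion that $G$ acts semiregularly on the orbits of $G\cap D$ on external lines.

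The whole argument is just bookkeeping, so there is no serious obstacle. The one step that genuinely uses the earlier analysis is the claim that an element outside $D$ moves every $N$-orbit, and this rests on two inputs already in hand: the semiregularity of $\mathrm{PSO}^-(4,q)$ on $\{H_1,H_2\}$, and the normality of $N=G\cap D$ in $G$ (needed so that $\mathcal{O}^g$ is again an $N$-orbit, rather than merely a $g^{-1}Ng$-orbit).
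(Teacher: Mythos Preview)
Your proof is correct, but it takes a somewhat different route from the paper's. You argue globally: each $(G\cap D)$-orbit is contained in one of the two $D$-orbits $H_1,H_2$ on $\mathcal{L}_E$, and any $g\in G\setminus D$ swaps $H_1$ and $H_2$, hence cannot fix any $(G\cap D)$-orbit. The paper instead argues locally via stabilisers: assuming some orbit $\ell^{G\cap D}$ is fixed by $G$, the Orbit--Stabiliser Theorem yields $|G:G\cap D|=|G_\ell:(G\cap D)_\ell|$; but the earlier observation $\mathrm{P}\Gamma\mathrm{U}(4,q)_{\mathcal{Q}^+,\ell}=D_\ell$ forces $G_\ell\leqslant D$, hence $G_\ell=(G\cap D)_\ell$, giving $|G:G\cap D|=1$ and contradicting $G\not\leqslant D$. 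Both arguments draw on the same preliminary discussion, just on different (equivalent) consequences of it: you use that $\mathrm{PSO}^-(4,q)$ acts semiregularly on $\{H_1,H_2\}$, while the paper uses the stabiliser equality $\mathrm{PSO}^-(4,q)_\ell=\mathrm{P}\Omega^-(4,q)_\ell$. Your version makes the mechanism more visible (one sees exactly where the $(G\cap D)$-orbits are sent), whereas the paper's version is a couple of lines shorter and avoids any explicit reference to the partition $H_1\cup H_2$.
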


\begin{proof}
Suppose the contrary, that $G$ fixes an orbit $\ell^{G\cap D}$. Then $\ell^G = \ell^{G\cap D}$ and by the Orbit-Stabiliser Theorem,
 $$|G\cap D: (G\cap D)_\ell | =| \ell^{G\cap D} | = | \ell^G | = | G : G_\ell |$$
and hence 
\begin{equation}
\label{prop1eqn}
 | G : G\cap D | = | G_\ell : (G\cap D)_\ell |.
\end{equation}

Now from the discussion at the beginning of this section, we have $\mathrm{P} \Gamma \mathrm{U}(4,q)_{\mathcal{Q}^+,\ell} = D_\ell$. 
So, $G$ is a subgroup of $\mathrm{P} \Gamma \mathrm{U}(4,q)_{\mathcal{Q}^+}$, and $G_{\ell}$ is a subgroup of $D_{\ell}$. Hence $G_\ell = (G \cap D)_\ell$, and so by Equation \ref{prop1eqn}, $|G:G \cap D| = 1$. 
This implies that $G = G \cap D$ and $G$ is a subgroup of $D$. This is a contradiction by the definitions of $D$ and $G$. Therefore, $G$ must act semiregularly on the orbits of $G\cap D$ on external lines.

\end{proof}

 \begin{theorem}
\label{majortheorem}
Suppose $G < \overline{G}$, where $\overline{G}$ is a subgroup of the collineation group of $\mathrm{H}(3,q^2)$ stabilising $\mathrm{W}(3,q)$. Further suppose that $\overline{G}$ and $G$ satisfy the following conditions: 
 \begin{enumerate}
\item $| \overline{G}{:} G | = 2$,
\item $\overline{G}$ acts semiregularly on $\mathcal{L}_E/G$,
\item $\mathcal{P}_E / \overline{G} = \mathcal{P}_E / G$.
\end{enumerate}

Write $\mathcal{L}_E / \overline{G} = \{ \ell_1^{\overline{G}}, \ell_2^{\overline{G}}, \dots \ell_n^{\overline{G}} \} $, where each $\ell_i$ is a representative from a distinct orbit of $\overline{G}$ on $\mathcal{L}_E$. Then $\bigcup_{i=1}^n \ell_i^G$ is a relative hemisystem \index{relative hemisystem}.  
\end{theorem}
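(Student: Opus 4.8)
The plan is to reduce the half-counting at every external point to a pairing argument coming from the index-$2$ extension, and then localise at each point using condition~(3). First I would unpack condition~(2). Since $\overline{G}/G$ has order $2$ by condition~(1) and acts on $\mathcal{L}_E/G$, the semiregularity of $\overline{G}$ on $\mathcal{L}_E/G$ says that no element of $\overline{G}\setminus G$ fixes any $G$-orbit on external lines. Given a $\overline{G}$-orbit $\mathcal{O}$ on $\mathcal{L}_E$, it is a union of $G$-orbits permuted transitively by $\overline{G}$; the setwise stabiliser in $\overline{G}$ of one such $G$-orbit contains $G$, hence equals $G$ or $\overline{G}$, and the latter is excluded by condition~(2). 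Therefore $\mathcal{O}$ is the disjoint union of exactly two $G$-orbits, and any $\tau\in\overline{G}\setminus G$ interchanges them (using that $G\trianglelefteq\overline{G}$, so $(\ell^{G})^{\tau}=(\ell^{\tau})^{G}$ is again a $G$-orbit). Consequently $\mathcal{R}:=\bigcup_{i=1}^{n}\ell_i^{G}$ contains precisely one of the two $G$-orbits inside each $\overline{G}$-orbit, so with $\mathcal{R}':=\mathcal{L}_E\setminus\mathcal{R}$ (a union of the complementary $G$-orbits) we get $\mathcal{R}^{\tau}=\mathcal{R}'$ and $(\mathcal{R}')^{\tau}=\mathcal{R}$ for every $\tau\in\overline{G}\setminus G$; this is well defined since $\mathcal{R}$ is a union of $G$-orbits, so $\mathcal{R}^{\tau}$ depends only on the coset $G\tau$.

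Next I would, for a fixed external point $P$, produce an element of $\overline{G}\setminus G$ fixing $P$. By condition~(3), $P^{\overline{G}}=P^{G}$, so $|P^{\overline{G}}|=|P^{G}|$; combining this with $|\overline{G}{:}G|=2$ and the orbit--stabiliser theorem gives $|\overline{G}_P:(G\cap\overline{G}_P)|=2$. In particular there exists $\tau_P\in\overline{G}_P$ with $\tau_P\notin G$.

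Finally I would run the count at $P$. Let $S_P$ be the set of external lines through $P$; by the remark in the introduction $|S_P|=q$. Since $\overline{G}$ stabilises $\mathrm{W}(3,q)$, the element $\tau_P$ maps external lines to external lines, and since $\tau_P$ fixes $P$ it maps lines through $P$ to lines through $P$; hence $\tau_P$ permutes $S_P$. Because $\tau_P\in\overline{G}\setminus G$, the first paragraph gives $\tau_P(\mathcal{R})=\mathcal{R}'$ and $\tau_P(\mathcal{R}')=\mathcal{R}$, so intersecting with the $\tau_P$-invariant set $S_P$ shows $\tau_P$ restricts to mutually inverse bijections $S_P\cap\mathcal{R}\to S_P\cap\mathcal{R}'$ and $S_P\cap\mathcal{R}'\to S_P\cap\mathcal{R}$. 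Thus $|S_P\cap\mathcal{R}|=|S_P\cap\mathcal{R}'|$, and as these two sets partition $S_P$ each has size $q/2$. Since $P$ was arbitrary, $\mathcal{R}$ is a relative hemisystem.

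I expect the only genuinely delicate point to be the bookkeeping in the first paragraph: pinning down that condition~(2), read correctly, forces every $\overline{G}$-orbit on $\mathcal{L}_E$ to split into exactly two $G$-orbits swapped by $\overline{G}\setminus G$ (equivalently, that $G$ is exactly the kernel of $\overline{G}$ acting on $\mathcal{L}_E/G$), which is what turns $\mathcal{R}$ and its complement into a $\overline{G}\setminus G$-pair. After that, conditions~(1) and~(3) serve only to supply a point-stabilising witness $\tau_P$ outside $G$, and the half-count is an immediate consequence of $\tau_P$ interchanging $S_P\cap\mathcal{R}$ with $S_P\cap\mathcal{R}'$.
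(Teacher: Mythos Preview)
Your proof is correct and follows essentially the same approach as the paper: both arguments use conditions~(1) and~(2) to split each $\overline{G}$-orbit on $\mathcal{L}_E$ into two $G$-orbits interchanged by any element of $\overline{G}\setminus G$, and then use condition~(3) to produce, for each external point $P$, an element of $\overline{G}_P\setminus G$ that forces the half-count at $P$. The only cosmetic difference is that the paper phrases the last step via ``line orbit incidence numbers'' $n_{X,\ell}^{G}$ summed over $\overline{G}$-orbit representatives, whereas you give the equivalent direct bijection $S_P\cap\mathcal{R}\to S_P\cap\mathcal{R}'$.
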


\begin{proof}
 Let $X$ be an external point. For $\ell \in \mathcal{L}_E$, we define the \textit{line orbit incidence number} \index{line orbit incidence number} as
 $$n_{X, \ell}^G = | \{ m \in \ell^G \mid X\inc m \} |$$
First note that for all $g \in G$, \begin{equation}\label{loin} n_{X, \ell}^G = n_{X^g, \ell}^G \end{equation}  because $X \inc m$ if and only if $ X^g \inc  m^g$.
  Now, since $ |\overline{G}{:} G|=2$, for all $\ell \in \mathcal{L}_E$ there exist $m_1, m_2 \in \mathcal{L}_E$ such that $\ell^{\overline{G}} = m_1^G \cup m_2^G$. Then, we can find $t \in \overline{G}$ such that
 $(m_1^G)^t = m_2^G$. Since $\overline{G}$ acts semiregularly on $\mathcal{L}_E/G$, the orbits of $\overline{G}$ on $\mathcal{L}_E/G$ have size two and 
 $G$ is the kernel of the action.
 Therefore, there exists an element $t \in \overline{G}$ such that for all $\ell \in \mathcal{L}_E$, we have $ \ell^{\overline{G}} = \ell^G \dot{\cup} (\ell^t)^G$.
 We then have $n_{X, \ell}^{\overline{G}} = n_{X, \ell^t}^G + n_{X, \ell}^G$. 
 Consider $n_{X, \ell^t}^G = | \{ m \in (\ell^t)^G \mid X\inc m \}|$. Since $G$ has index two and is therefore a normal subgroup of $\overline{G}$, $n_{X, \ell^t}^G = | \{ n^t\in (\ell^G)^t \mid X\inc n^t \}| = 
 | \{ n\in (\ell^G) \mid X^{t^{-1}}\inc n \}|$. Now, since $\overline{G}$ and $G$ have the same orbits on external points by Condition 3, , there exists $u \in G$ such that $X^{t^{-1}} = X^u$. So $n_{X, \ell^t}^G = | \{ n\in (\ell^G) \mid X^u\inc n \}|$ = $n_{X^u, \ell}^G = n_{X,\ell}^G$, by \eqref{loin}.
 Therefore,  
 \begin{equation}
 \label{majorhalf}
  n_{X, \ell}^{\overline{G}} = 2 n_{X, \ell}^G
 \end{equation}

 Consider the orbit representatives $\ell_1 , \ell_2, \dots \ell_n$ of $\mathcal{L}_E/ \overline{G}$.
 The number $q$ of external lines incident with $X$ is then equal to the sum of the line orbit incidence numbers $n_{X,\ell_i}^{\overline{G}}$, for $i\in \{1, \dots n \}$.
  Then, from Equation \ref{majorhalf}, $q/2 = \sum_{i=1}^n n_{X, \ell_i}^G$. So the number of lines of $\bigcup_{i=1}^n \ell_i^G$ incident with $X$ is $q/2$. 
  Therefore, since $X$ was an arbitrary external point, $\bigcup_{i=1}^n \ell_i^G$ is a relative hemisystem.

\end{proof}

We remark that Theorem \ref{majortheorem} is similar to the technique developed by Bayens \cite[\textsection 4.4]{MR3167190} to construct hemisystems of higher dimensional Hermitian spaces.
When we are dealing with the Penttila-Williford \index{relative hemisystem}relative hemisystems and perturbations of them, we can condense the criteria given in Theorem \ref{majortheorem} to two sufficient criteria to determine a relative hemisystem.
We state these conditions in the following corollary to Theorem \ref{majortheorem}.

\begin{corollary}
\label{Cosscoroll}
Suppose $\overline{G}$ is a subgroup of $\mathrm{PSO}^-(4,q)$ and $G$ is the intersection of $\overline{G}$ and $\PO (4,q)$.
Further suppose that $\overline{G}_P$ is not contained in $\PO(4,q)$ for all external points $P \in \mathcal{P}_E$.
Then $(G,\overline{G})$ satisfies the conditions given in Theorem \ref{majortheorem} and thus determines a relative hemisystem.\index{relative hemisystem}
\end{corollary}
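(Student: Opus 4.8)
The plan is to verify that the hypotheses of Corollary \ref{Cosscoroll} imply each of the three conditions of Theorem \ref{majortheorem} in turn, the main work being Conditions (2) and (3).

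First I would establish Condition (1), namely $|\overline{G}{:}G| = 2$. Since $G = \overline{G} \cap \PO(4,q)$ and $\overline{G} \leqslant \PSO(4,q)$, the second isomorphism theorem gives $\overline{G}/G \cong \overline{G}\PO(4,q)/\PO(4,q) \leqslant \PSO(4,q)/\PO(4,q)$, which has order $2$ by \cite[Table 2.1d]{kleidman1990subgroup}. So $|\overline{G}{:}G|$ is $1$ or $2$; it cannot be $1$, since that would force $\overline{G} \leqslant \PO(4,q)$ and then $\overline{G}_P \leqslant \PO(4,q)$ for every $P$, contradicting the standing hypothesis. Hence $|\overline{G}{:}G| = 2$.

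Next, Condition (2): $\overline{G}$ acts semiregularly on $\mathcal{L}_E/G$. Here I would invoke Proposition \ref{cond2prop} with the roles $G \mapsto \overline{G}$ and $D \mapsto \PO(4,q)$ (noting $\PO(4,q)$ is exactly the subgroup of $\mathrm{P}\Gamma\mathrm{U}(4,q)_{\mathcal{Q}^+}$ stabilising the two reguli families, as recalled at the start of Section \ref{magic}), so that $G = \overline{G} \cap D$ in the notation of that proposition. Since $\overline{G}$ is \emph{not} contained in $D = \PO(4,q)$ — which follows from $|\overline{G}{:}G| = 2$ just proved — Proposition \ref{cond2prop} yields that $\overline{G}$ acts semiregularly on the orbits of $\overline{G}\cap D = G$ on $\mathcal{L}_E$, which is precisely Condition (2).

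Finally, Condition (3): $\mathcal{P}_E/\overline{G} = \mathcal{P}_E/G$. It suffices to show every $\overline{G}$-orbit on $\mathcal{P}_E$ is a single $G$-orbit. Fix $P \in \mathcal{P}_E$. Since $G \trianglelefteq \overline{G}$ with index $2$, the $\overline{G}$-orbit of $P$ splits into either one or two $G$-orbits; it splits into two exactly when $\overline{G}_P \leqslant G$, equivalently $\overline{G}_P = G_P$. But by hypothesis $\overline{G}_P \not\leqslant \PO(4,q)$, hence $\overline{G}_P \not\leqslant G$, so $|\overline{G}_P : G_P| = 2 = |\overline{G}{:}G|$ and the $\overline{G}$-orbit of $P$ does \emph{not} split. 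Thus $P^{\overline{G}} = P^G$ for all $P \in \mathcal{P}_E$, giving Condition (3). With all three conditions verified, Theorem \ref{majortheorem} applies and produces a relative hemisystem; the last clause of the corollary follows. The main obstacle is making the orbit-splitting dichotomy in Condition (3) precise — i.e. checking carefully that $\overline{G}_P \not\leqslant G$ forces the index $|\overline{G}_P:G_P|$ to equal $2$ and hence no splitting — but this is a standard index computation in a group with a normal subgroup of index $2$.
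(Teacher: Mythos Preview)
Your proposal is correct and follows essentially the same approach as the paper's proof. The only organisational difference is that for Condition~(2) you invoke Proposition~\ref{cond2prop} directly, whereas the paper re-derives its key step inline (showing $\overline{G}_\ell \leqslant \PSO(4,q)_\ell = \PO(4,q)_\ell$, hence $G_\ell = \overline{G}_\ell$); both routes rest on the same fact $\PSO(4,q)_\ell = \PO(4,q)_\ell$ established at the start of Section~\ref{magic}, and the verifications of Conditions~(1) and~(3) are identical in substance.
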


\begin{proof}%\hspace{-0.1cm}$^*$
First notice that if  $\overline{G}_P$ is not contained in $\PO(4,q)$ for all external points $P \in \mathcal{P}_E$, then there exists an element $g \in \overline{G}$ such that $g \notin \PO(4,q)$. So $\overline{G}$ is not contained in $\PO (4,q)$.
We have $|\overline{G}{:} G| = |\overline{G} {:} \overline{G} \cap \PO (4,q)|= |\overline{G} \cdot \PO (4,q){:}\PO (4,q)|= |\PSO(4,q){:}\PO (4,q)| = 2$. 
Let $\ell \in \mathcal{L}_E$.   Now, the discussion at the beginning of Section \ref{magic} implies that for any $\ell \in \mathcal{L}_E$, $\PSO(4,q)_{\ell} = \PO(4,q)_{\ell}$. 
Thus $\overline{G}_\ell=\PSO (4,q)_\ell$ is contained in $\PO(4,q)$. Therefore, $G_\ell = \overline{G}_\ell \cap \PO (4,q)=\overline{G}_\ell$. 
Now, since $\overline{G}_P$ is not contained in $\PO(4,q)$, $\overline{G}_P \neq \overline{G}_P \cap \PO (4,q)$. So the stabiliser of $P$ under $\overline{G}$ is not equal to the stabiliser under $G$. By the Orbit-Stabiliser Theorem, 
$$\frac{|P^{\overline{G}}|}{|P^G|} = \frac{|\overline{G}{:}\overline{G}_P|}{|G{:}G_P|} = \frac{|\overline{G}{:}G|}{|\overline{G}_P{:}G_P|}.$$
Since $|\overline{G}{:}G| = 2$, we have $|\overline{G}_P{:}G_P| = 2$ and hence $P^{\overline{G}} = P^G.$
Therefore, $(G,\overline{G})$ satisfies the conditions of Theorem \ref{majortheorem}.

\end{proof}

\section{New proofs of the known infinite families}
\label{newproofs}
Let $\psi(x_1,x_2) = x_1^2 + \upsilon^{q+1} x_2^2  + x_1x_2$ be a form with $\upsilon \in \mathrm{GF}(q^2)$ satisfying $\upsilon^q + \upsilon = 1$. Then, $\psi$ is irreducible over $\mathrm{GF}(q)$.
Next, notice that the totally singular points and lines of the form
\begin{equation}
 \label{gammaquad}
 \mathcal{Q}^+{:} \,  x_1^2 + \upsilon^{q+1} x_2^2 +x_1x_2 + x_3x_4
\end{equation}
define a hyperbolic quadric that intersects the Hermitian space defined by the form $x_1x_2^{q} + x_2x_1^q +x_3x_4^q+x_4x_3^q$ over $\mathrm{GF}(q^2)$
in an elliptic quadric. This elliptic quadric's defining equation is simply the equation for $\mathcal{Q}^+$ restricted to $\mathrm{GF}(q)$.
In this section, we will represent lines in array form, as the span of two projective points. 
The reguli of the hyperbolic quadric are:
 \begin{align*}
  \mathscr{R}_1 & = \lbrace \left[ \begin{smallmatrix} \upsilon^q&1 &0&\lambda \\ \lambda \upsilon&\lambda &1&0 \\ \end{smallmatrix} \right]\mid \lambda  \in \mathrm{GF}(q^2) \rbrace \cup\lbrace \left[ \begin{smallmatrix} 0&0&0&1 \\ \upsilon&1&0&0\\ \end{smallmatrix} \right] \rbrace,\\
&\\
\mathscr{R}_2 & = \lbrace \left[ \begin{smallmatrix} \upsilon^q&1&\lambda&0 \\ \lambda \upsilon&\lambda &0 &1 \\ \end{smallmatrix} \right] \mid \lambda  \in \mathrm{GF}(q^2) \rbrace \cup\lbrace \left[ \begin{smallmatrix} 0&0&1&0 \\ \upsilon&1&0&0\\ \end{smallmatrix} \right]\rbrace.
\end{align*}
\begin{proposition}
 The Penttila-Williford family of relative hemisystems, admitting $\mathrm{P}\Omega^-(4,q)$ as an automorphism group for each $q$ even, satisfies Corollary \ref{Cosscoroll}, with associated groups $G = \PO(4,q)$ and $\overline{G} = \PSO(4,q)$, which fix $\mathcal{Q}^+$.
\end{proposition}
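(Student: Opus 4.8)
The plan is to verify the three hypotheses of Corollary~\ref{Cosscoroll} for $G = \PO(4,q)$ and $\overline{G} = \PSO(4,q)$, and then to identify the relative hemisystem produced by Theorem~\ref{majortheorem} with one of the Penttila--Williford examples $H_1$, $H_2$. Recall from the discussion opening Section~\ref{magic} that $\PSO(4,q)$ is the full stabiliser $\mathrm{P}\Gamma\mathrm{U}(4,q)_{\mathcal{Q}^+}$ of the hyperbolic quadric, that $\PO(4,q)$ is its index-two subgroup stabilising the two reguli $\mathscr{R}_1,\mathscr{R}_2$, and that both groups fix $\mathcal{Q}^+$ and stabilise $\mathrm{W}(3,q)$. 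Since $\PO(4,q) \le \PSO(4,q)$, the requirement of Corollary~\ref{Cosscoroll} that $\overline{G} \le \PSO(4,q)$ with $G = \overline{G} \cap \PO(4,q)$ holds immediately.

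The real content is the condition that $\overline{G}_P = \PSO(4,q)_P$ is not contained in $\PO(4,q)$ for every external point $P$. As in the proof of Corollary~\ref{Cosscoroll}, the Orbit--Stabiliser Theorem together with $|\PSO(4,q){:}\PO(4,q)| = 2$ shows this is equivalent to $\PO(4,q)$ and $\PSO(4,q)$ having the same orbits on $\mathcal{P}_E$. To get started I would exhibit a concrete element of $\PSO(4,q) \setminus \PO(4,q)$: in the coordinates of Section~\ref{newproofs}, the involution $\tau \colon (x_1,x_2,x_3,x_4) \mapsto (x_1,x_2,x_4,x_3)$ fixes both the form~\eqref{gammaquad} defining $\mathcal{Q}^+$ and the Hermitian form $x_1x_2^q + x_2x_1^q + x_3x_4^q + x_4x_3^q$, so $\tau \in \mathrm{P}\Gamma\mathrm{U}(4,q)_{\mathcal{Q}^+} = \PSO(4,q)$ and $\tau$ stabilises $\mathrm{W}(3,q)$; but reading off the given parametrisations, $\tau$ maps $\mathscr{R}_1$ onto $\mathscr{R}_2$ and vice versa, so $\tau \notin \PO(4,q)$ and $\PSO(4,q) = \langle \PO(4,q), \tau\rangle$. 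Hence the condition amounts to showing that $P$ and $P^\tau$ lie in the same $\PO(4,q)$-orbit for every external point $P$.

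I would deduce this from the fact that $\PO(4,q)$ acts transitively on the $q(q^4-1)$ external points, a fact that comes out of the Penttila--Williford analysis~\cite{penttila2011new} and can also be verified directly from the representation in Section~\ref{newproofs}: since $\PSO(4,q)$ contains $\PO(4,q)$ it is then transitive on $\mathcal{P}_E$ as well, so both groups have the single orbit $\mathcal{P}_E$ and the required condition holds. By Corollary~\ref{Cosscoroll}, $(G,\overline{G})$ then satisfies the hypotheses of Theorem~\ref{majortheorem} and determines a relative hemisystem. Finally, since $\PSO(4,q)$ is transitive on $\mathcal{L}_E$ (again from the start of Section~\ref{magic}), the integer $n$ in Theorem~\ref{majortheorem} equals $1$, the single orbit $\mathcal{L}_E = \ell_1^{\PSO(4,q)}$ splits into exactly the two $\PO(4,q)$-orbits that Penttila and Williford identified as $H_1$ and $H_2$, and the set $\ell_1^{\PO(4,q)}$ returned by Theorem~\ref{majortheorem} is one of them --- which is the asserted identification. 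The one step that is not purely formal, and hence the main obstacle, is the transitivity of $\PO(4,q)$ on $\mathcal{P}_E$ (equivalently, that $\tau$ preserves each $\PO(4,q)$-orbit of external points); everything else is bookkeeping with the coordinatisation of Section~\ref{newproofs} and the facts recalled at the start of Section~\ref{magic}.
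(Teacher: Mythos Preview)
Your proposal is correct and follows essentially the same route as the paper: the involution you call $\tau$ is exactly the element $g$ the paper uses, and both arguments hinge on the transitivity of $\PO(4,q)$ on external points taken from \cite{penttila2011new}. The only difference is that the paper additionally exhibits a specific external point $P_\omega=(\omega,0,1,1)$ fixed by $g$ before invoking transitivity, whereas you (equally validly) note that transitivity alone forces $\mathcal{P}_E/\PO(4,q)=\mathcal{P}_E/\PSO(4,q)$; be aware, though, that the symbol $\tau$ is reserved later in Section~\ref{newproofs} for the different map $(x_1,x_2,x_3,x_4)\mapsto(x_1+x_2,x_2,x_3,x_4)$.
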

\begin{proof}%\hspace{-0.1cm}$^*$
\label{pwproof}
 Let $\mathrm{H}(3,q^2)$ be the Hermitian space defined by the form $x_1x_2^{q} + x_2x_1^q +x_3x_4^q+x_4x_3^q$ over $\mathrm{GF}(q^2)$. The embedded 
 symplectic space $\mathrm{W}(3,q)$ is the restriction of the Hermitian form to $\mathrm{GF}(q)$. Recall that $\overline{G} = \PSO(4,q)$ is isomorphic to the stabiliser of $\mathcal{Q}^+$, and $G = \PO(4,q)$ is isomorphic to
 the stabiliser in $\overline{G}$ of the reguli of $\mathcal{Q}^+$.
 Consider $g \in  \overline{G}$ defined by 
 \[
  g = \left( \begin{smallmatrix} 1&0&0&0 \\ 0&1&0&0 \\ 0&0&0&1 \\ 0&0&1&0 \\ \end{smallmatrix} \right).
 \]
We claim that $g$ does not fix the reguli of the hyperbolic quadric $\mathcal{Q}^+$.
Finding the image of $\mathscr{R}_1$ under $g$ gives us
\[
 \left[ \begin{smallmatrix} \upsilon^q&1 &0&\lambda \\ \lambda \upsilon&\lambda&1&0 \\ \end{smallmatrix} \right]^g =   \left[ \begin{smallmatrix} \upsilon^q&1&\lambda&0 \\ \lambda \upsilon&\lambda &0 &1 \\ \end{smallmatrix} \right] 
\]
for $\lambda \in \mathrm{GF}(q^2)$, and 
\[
 \left[ \begin{smallmatrix} 0&0&0&1 \\ \upsilon&1&0&0\\ \end{smallmatrix} \right]^g  =  \left[ \begin{smallmatrix} 0&0&1&0 \\ \upsilon&1&0&0\\ \end{smallmatrix} \right]
\]
which are exactly the lines of $\mathscr{R}_2$. Since $g$ has order two, $g^{-1} = g$ and so $\mathscr{R}_2$ must map to $\mathscr{R}_1$ under the action of $g$.
Therefore, since $G$ stabilises the reguli of the hyperbolic quadric from the beginning of this section, $g \in \overline{G} \setminus G$.
Now, notice that $P_\omega = (\omega, 0, 1,1) \in \mathrm{H}(3,q^2)$ for all $\omega \in \mathrm{GF}(q^2)$, and if we take $\omega \in \mathrm{GF}(q^2) \setminus \mathrm{GF}(q)$, then $P_\omega$ is an external point. Then $P_\omega^g = (\omega,0,1,1)^g = (\omega,0,1,1)$ and therefore $g$ fixes $P_\omega$.
So $g \in  \overline{G}_{P_\omega}$, but $g \notin G_{P_\omega}$ because $g \notin G$. Therefore, $\overline{G}_{P_\omega} \neq G_{P_\omega} = \overline{G}_{P_\omega} \cap G$ and $\overline{G}_{P_\omega}$ is not contained
in $G$. Finally, from \cite{penttila2011new}, $G=\mathrm{P} \Omega^-(4,q)$ is transitive on external points. It immediately follows that $\overline{G}=\mathrm{PSO}^-(4,q)$ is transitive on external points as well.
This implies that $\overline{G}_Q$ is not contained in $G$ for all external points $Q \in \mathcal{P}_E$.
Therefore, $(G, \overline{G})$ determine a relative hemisystem for every $q$ even by Corollary \ref{Cosscoroll}, and this relative hemisystem belongs to the Penttila-Williford family of relative hemisystems.

\end{proof}
 We now prove that the Cossidente relative hemisystems satisfy the condition in Corollary \ref{Cosscoroll}. We begin by defining collineations $\tau$ and $\phi$ as follows:
\begin{align*}
 \tau{:} (x_1,x_2,x_3,x_4) & \mapsto (x_1+ x_2,x_2,x_3,x_4),\\
 \phi{:} (x_1,x_2,x_3,x_4) & \mapsto (x_1^q,x_2^q,x_3^q,x_4^q).
\end{align*}
\begin{theorem}
 The first family of Cossidente relative hemisystems\index{relative hemisystem} admitting $\mathrm{PSL}(2,q)$ as a setwise stabiliser (described in \cite{MR3081646}) satisfies Corollary \ref{Cosscoroll}. The associated groups are $G = \mathrm{PSL}(2,q) \times \langle \tau\phi \rangle$ and $\overline{G} = 
\mathrm{PSL}(2,q) \times \langle \tau, \phi \rangle$.
\end{theorem}
\begin{proof}%\hspace{-0.1cm}$^*$
Let $\mathrm{H}(3,q^2)$ be the Hermitian space in $\mathrm{PG}(3,q^2)$ with defining Gram matrix 
\[
 H = \left( \begin{smallmatrix} 0 & 1 & 0 & 0 \\
1 & 0 & 0 & 0 \\
0 & 0 & 0 & 1\\
0 & 0 & 1 & 0 \end{smallmatrix} \right).
\]
Note that this is the Gram matrix for the Hermitian space defined at the beginning of this section.
 Let $\mathcal{Q}^+$ be the hyperbolic quadric described in Equation \ref{gammaquad}. 
 The Baer subspace that contains the symplectic space $\mathrm{W}(3,q)$ and the elliptic quadric $\mathcal{Q}^- = \mathcal{Q}^+ \cap \mathrm{H}(3,q^2)$ consists of points whose coordinates lie solely in $\mathrm{GF}(q)$.
% Recall the definitions of collineations $\tau$ and $\phi$:
% \begin{align*}
%  \tau{:} (x_1,x_2,x_3,x_4) & \mapsto (x_1+ x_2,x_2,x_3,x_4),\\
%  \phi{:} (x_1,x_2,x_3,x_4) & \mapsto (x_1^q,x_2^q,x_3^q,x_4^q).
% \end{align*}
 Notice that $\mathrm{H}(3,q^2)$ and the Baer subspace are fixed under both $\tau$ and $\phi$. 
%more infor about these polarities
Recall from Section \ref{CRHs} that the construction of this family of relative hemisystems stemmed from stabilising a conic of the elliptic quadric $\mathrm{Q}^-(3,q)$ 
fixed by $\mathrm{P} \Omega^-(4,q)$ in $\mathrm{W}(3,q)$ \cite{MR3081646}. Clearly, $\tau$ and $\phi$ preserve the form defining $\mathcal{Q}^+$ given in Equation \ref{gammaquad}.
Let us consider the application of $\tau$ to the regulus $\mathscr{R}_1$.
\[
%   \left[ \begin{smallmatrix} \upsilon^q&1 &0&\lambda \\ \lambda \upsilon&\lambda &1&0 \\ \end{smallmatrix}\right] ^\tau & =  \left[ \begin{smallmatrix} \upsilon^q +1 & 1 &0&\lambda \\ \lambda\upsilon + \lambda&\lambda&1&0 \\ \end{smallmatrix}\right]\\
% & =  \left[ \begin{smallmatrix} \lambda\upsilon + \lambda&\lambda&1&0 \\ \upsilon^q +1 & 1 &0&\lambda \\ \end{smallmatrix}\right]\\
% & =  \left[ \begin{smallmatrix} \lambda(\upsilon^q +1) + \lambda&\lambda&1&0 \\ (\upsilon +1) +1 & 1 &0&\lambda \\ \end{smallmatrix}\right]\\
% & =  \left[ \begin{smallmatrix} \lambda\upsilon^q &\lambda&1&0 \\ \upsilon  & 1 &0&\lambda \\ \end{smallmatrix}\right]\\
% & =  \left[ \begin{smallmatrix} \upsilon^q &1&\eta&0 \\ \eta\upsilon  & \eta &0&1 \\ \end{smallmatrix}\right] \in \mathscr{R}_2
% \end{align*}
% where $\eta = \lambda^{-1}$. Moreover,
% \begin{align*}
 \left[ \begin{smallmatrix} 0&0&0&1 \\ \upsilon&1&0&0\\ \end{smallmatrix} \right]^\tau =  \left[ \begin{smallmatrix} 0&0&0&1 \\ \upsilon+1&1&0&0\\ \end{smallmatrix} \right] 
=  \left[ \begin{smallmatrix} 0&0&0&1 \\ \upsilon^q &1&0&0\\ \end{smallmatrix} \right] \in \mathscr{R}_2.
\]
Now, consider the application of $\phi$ to the reguli. For $\mathscr{R}_1$, we have the following:
% \begin{align*}
%  \left[ \begin{smallmatrix} \upsilon^q&1 &0&\lambda \\ \lambda \upsilon&\lambda &1&0 \\ \end{smallmatrix}\right] ^\phi & =  \left[ \begin{smallmatrix} (\upsilon^q)^q&1^q &0&\lambda^q \\ \lambda^q \upsilon^q&\lambda^q &1^q&0 \\ \end{smallmatrix}\right]\\
%   & =  \left[\begin{smallmatrix} \upsilon &1 &0&\lambda^q \\ \lambda^q \upsilon^q&\lambda^q &1&0 \\ \end{smallmatrix}\right]\\
%     & =  \left[\begin{smallmatrix}\lambda^q \upsilon^q&\lambda^q &1&0 \\  \upsilon &1 &0&\lambda^q \\ \end{smallmatrix}\right]\\
%     & = \left[ \begin{smallmatrix} \upsilon^q&1&\mu&0 \\ \mu \upsilon&\mu &0 &1 \\ \end{smallmatrix} \right] \in \mathscr{R}_2
% \end{align*}
% where $\mu = \lambda^{q^2-q-1}$.
% Additionally,
\begin{align*}
 \left[ \begin{smallmatrix} 0&0&0&1 \\ \upsilon&1&0&0\\ \end{smallmatrix} \right]^\phi &=  \left[ \begin{smallmatrix} 0&0&0&1 \\ \upsilon^q&1&0&0\\ \end{smallmatrix} \right] \in \mathscr{R}_2.
\end{align*}
Therefore, both $\phi$ and $\tau$ map $\mathscr{R}_1$ to $\mathscr{R}_2$. Since $\phi$ and $\tau$ have order 2, $\mathscr{R}_2$ must map to $\mathscr{R}_1$ under each of $\tau$ and $\phi$.
It follows that their product $\tau\phi$ fixes reguli.
Let $K = \langle \tau, \phi \rangle$, which is isomorphic to $\mathbb{Z}_2 \times \mathbb{Z}_2$.
Now, take the collineation group $J$ isomorphic to $\mathrm{PSL}(2,q)$ that fixes the hyperplane $\pi:x_2=0$. We may represent this as matrices of the form
\[
 \left( \begin{smallmatrix}
1&0&0&0\\
\sqrt{bf}+1&1&\sqrt{be}&\sqrt{cf}\\
\sqrt{bc}&0&b&c\\
\sqrt{ef}&0&e&f\end{smallmatrix} \right),
\]
where $b,c,e,f \in \mathrm{GF}(q)$ and $bf+ce=1$.
Define $\overline{G}= J \times K$ and $G = J \times \langle \tau\phi \rangle$.
We claim that these groups satisfy the conditions of Corollary \ref{Cosscoroll}.
 Firstly, notice that $G$ is contained in the intersection of $\overline{G}$ and $\PO(4,q)$ because $\mathrm{PSL}(2,q)$ is a subgroup of $\PO(4,q)$ \cite{wilson2009finite},
and $\tau\phi$ fixes the reguli of $\mathcal{Q}^+$, just as $\PO(4,q)$ does. Furthermore, if $g \in \overline{G} \cap \PO(4,q)$, then $g$ must fix the reguli of $\mathcal{Q}^+$, since $\PO(4,q)$ does. Therefore, $g \in G$ and we have shown that $G = \overline{G} \cap \PO(4,q)$. 
Furthermore, $\overline{G}$ is not contained in $\PO(4,q)$ because $\tau$ and $\phi$ do not fix the reguli of $\mathcal{Q}^+$. We must now show for all external points $P$ that $\overline{G}_P$ is not contained in $\PO(4,q)$.

% The line $\ell$ satisfying $x_2 = x_4 = 0$ meets all of the orbits of $\mathrm{PSL}(2,q)$ on points of $\pi$, and so we may take orbit representatives of the form $(1,0,X,0)$, where $X \in \mathrm{GF}(q^2)\setminus \mathrm{GF}(q)$. There are exactly $q$ elements of $\mathrm{PSL}(2,q)$ induced in the plane that fix the line $\ell$ because
% \begin{align*}
%  (1,0,X,0) \left( \begin{smallmatrix}
% 1 & 0 & 0 & 0 \\
% 0 & 1 & 0 & 0 \\
% \sqrt{bc} & 0 & b & c\\
% \sqrt{ef} & 0 & e & f \end{smallmatrix} \right) & = (1+X\sqrt{bc},0, bX, cX)
% \end{align*}
% which fixes the points on $\ell$ if and only if $c=0$ and $b=1$. Additionally, since $bf+ce = 1$, $f=1$. Therefore, the matrices in $\mathrm{PSL}(2,q)$ which fix $\ell$ pointwise are precisely those of the form
% \[
% \left( \begin{smallmatrix}
% 1 & 0 & 0 & 0 \\
% 0 & 1 & \sqrt{e} & 0 \\
% 0 & 0 & 1 & 0\\
% \sqrt{e} & 0 & e & 1 \end{smallmatrix} \right)
% \]
% where $e \in \mathrm{GF}(q)$. 
Let us first consider the external points that lie on the plane $\pi$. Consider the line $\ell$ satisfying $x_2=x_4=0$. A simple calculation shows that the stabiliser of $\ell$ in $J$ consists only of collineations represented by matrices of the form
\[
M_{b,e}:=
\begin{pmatrix}
1&0&0&0\\
0&1&\sqrt{be}&0\\
0&0&b&0\\
\sqrt{e/b}&0&e&1/b
\end{pmatrix}, \]
where $b,e\in \mathrm{GF}(q)$ and $b \neq 0$. Now if $\xi,\zeta \in\mathrm{GF}(q^2)\backslash\mathrm{GF}(q)$, then $(1,0,\xi,0)$ will be mapped to $(1,0,\zeta,0)$ by $M_{b,e}$ if and only if $b\xi=\zeta$. We can therefore define an equivalence relation on the points which lie on $\ell$ by $(1,0,\xi,0) \sim (1,0,\zeta,0) \Leftrightarrow \zeta = b\xi$, for some $b \in \mathrm{GF}(q) \setminus \{0\}$. In other words, two points on $\ell$ are related if they lie in the same orbit under $J$. Each equivalence class will have size $q-1$, and since the orbits of $J$ partition the external points of $\ell$, we find that $\ell$ must meet $q$ orbits of $J$ on external points. 
%Therefore,no two points of $\ell$ of the form $(1,0,\xi,0)$, where $\xi\notin \mathrm{GF}(q)$, lie in the same orbit under $\mathrm{PSL}(2,q)$. 

Note that the stabiliser of $(1,0,\xi,0)$ in $J$ is then $\{M_{1,e} \mid e \in \mathrm{GF}(q) \}$, which is a set of size $q$. By the Orbit-Stabiliser Theorem, the orbit of each point $(1,0,\xi,0)$, with $\xi\notin \mathrm{GF}(q)$, has size $q^2-1$. Since there are $(q^2-q)(q+1)$ external points in $\pi$, we see that the totally isotropic points $(1,0,\xi,0)$ satisfying $\xi\notin \mathrm{GF}(q^2)$ form a complete set of orbit representatives for $J$ acting on the external points in $\pi$.
Also notice that $\phi$ does not fix $\ell$, but $\tau$ does, and therefore $\tau$ lies in $\overline{G}_P$ for $P \in \ell$. Since $\tau$ switches reguli, it follows that $\overline{G}_P \nsubseteq \PO(4,q)$ for all $P \in \ell$, and since these points are orbit representatives for the action of $J$ on external points on $\pi$, $\overline{G}_P \nsubseteq \PO(4,q)$ for all external points $P$ on $\pi$.

%, and so $\tau\phi$ does not fix $\ell$. Therefore, the kernel of the action of $\overline{G}$ on $\ell$ has order $2q$, and the kernel of the action of $G$ on $\ell$ has order $q$. Furthermore, the stabiliser of any point $P= (1,0,\xi,0)$ under the action of $\overline{G}$ is the kernel of the action on $\ell$. Since $\tau$ is in the kernel of the action and it is contained in $\overline{G}$, but not $G$, we have $|\overline{G}_P : G_P| = 2$, and so $\overline{G}_P$ is not contained in $\PO(4,q)$ for all external points $P$ that lie on $\pi$.

Let $\mathcal{C}$ be the intersection of $\mathcal{Q}^+$ with $\pi$, which is a conic.
Now consider the external points which do not lie on $\pi$ or are collinear to a point on the conic $\mathcal{C}$. 
We take the line $n$ defined by the span of the point $Q = (1,0,1,0)$, which lies on $\pi$ and the point $R=(0,1,0,1)$. Notice that every point on $n$ except for $Q$ lies outside $\pi$. These points can be written in the form $(u,1,u,1)$, where $u \in \mathrm{GF}(q^2)\setminus \mathrm{GF}(q)$. 
In order to compute the stabiliser of each of these points in $J$, we consider

\begin{align*}
 (u,1,u,1) \left( \begin{smallmatrix}
1&0&0&0\\
\sqrt{bf}+1&1&\sqrt{be}&\sqrt{cf}\\
\sqrt{bc}&0&b&c\\
\sqrt{ef}&0&e&f\end{smallmatrix} \right) & = \left( (\sqrt{bc}+1)u + \sqrt{bf} +1 + \sqrt{ef}, 1, \sqrt{be} + e + bu, f + \sqrt{cf} + cu \right).
\end{align*}

Now, $f + \sqrt{cf} + cu =1$ and since $u \in \mathrm{GF}(q^2)\setminus \mathrm{GF}(q)$ and $c,f  \in \mathrm{GF}(q)$, we must have $c=0$. Therefore, $f=1$. Recall that we also have the relation $bf+ce=1$, which implies that $b=1$. Finally, 
$ u = u(1+ \sqrt{bc}) +1 + \sqrt{(b+e)f} = u + 1 + \sqrt{1+e}$ and therefore, $e=0$. Substituting these values of $b,c,e,f$ into the matrix, we have the identity matrix, and therefore the stabiliser of each of the points is trivial. 
Furthermore, by the Orbit-Stabiliser theorem, the orbit of each of these points must have size $q(q^2-1)$. Notice that since there are $q^2+1$ points on every line in $\mathrm{H}(3,q^2)$, we have
\[
 \sum_{S \in n \setminus\{ Q\}} |S^{J}| =  q(q^2-1)\times q^2.
\]
 Recall that there are a total of $q(q^2-1)(q^2+1)$ external points in $\mathrm{H}(3,q^2)$, and so only $q(q^2-1)$ external points are not covered by these orbits. These are the external points that are collinear with the conic $\mathcal{C}$. Therefore, the set of points on $n$ (excluding $Q$) form a transversal of the orbits of $J$ on external points which are not collinear with points of the conic $\mathcal{C}$, or lie on $\pi$. Now, consider the following element of $\overline{G}$:
\[
C = \left( \begin{smallmatrix}
1 & 0 & 0 & 0 \\
1 & 1 & 1 & 0 \\
0 & 0 & 1 & 0\\
1 & 0 & 1 & 1 \end{smallmatrix} \right)\\
 = \left( \begin{smallmatrix}
1 & 0 & 0 & 0 \\
0 & 1 & 1 & 0 \\
0 & 0 & 1 & 0\\
1 & 0 & 1 & 1\\
 \end{smallmatrix} \right)\tau\\
=D\tau. \\
\]
Notice that $D \in J$ and $C$ fixes all of the points $(u,1,u,1)$ on $n \setminus \{ Q \}$. But $C$ is not in $G$ because it swaps the reguli of $\mathcal{Q}^+$ (it is the product of an element of $J$ which fixes reguli, and $\tau$ which swaps them). Therefore, for all external points $P$ which do not lie on $\pi$ and are not collinear to any points on $\mathcal{C}$, $|\overline{G}_P : G_P| \neq 1$ and so $\overline{G}_P \nsubseteq \PO(4,q)$.

Finally, consider the $q(q^2-1)$ points which are collinear with some point of the conic $\mathcal{C}$. Let $m$ be the line spanned by $R = (1,0,0,0)$, which is the nucleus of the conic, and the point $(0,1,0,\gamma)$, where $\gamma \in \mathrm{GF}(q^2) \setminus \mathrm{GF}(q)$ such that $\gamma^q + \gamma = 1$. Define $W$ to be the set of $q$ external points incident with $\ell$. These can be written as $W= \{(0,1,0,\gamma) \} \cup \{ (1,v,0, \gamma v) : v\in \mathrm{GF}(q) \setminus \{ 0\} \}$.  We now calculate the stabiliser of a point in $W$:
\begin{align*}
 (1,v,0, \gamma v) \left( \begin{smallmatrix}
1&0&0&0\\
\sqrt{bf}+1&1&\sqrt{be}&\sqrt{cf}\\
\sqrt{bc}&0&b&c\\
\sqrt{ef}&0&e&f\end{smallmatrix} \right) & = \left(1+v(\sqrt{bf}+1) + \gamma v\sqrt{ef}, v, v\sqrt{be}+\gamma e v, v+\sqrt{cf} + \gamma fv\right).
\end{align*}
A simple calculation shows that in order for this to be equal to $ (1,v,0, \gamma v)$, we must have  $c=0, e=0, f=1, b=1$. Substituting these values in gives us the identity matrix, and so no orbit has fixed points under $J$. So the size of $W^{J} = q |J| = q^2(q^2-1)$.
Therefore, the points in $W$ form a transversal of the orbits of $J$ on external points that are collinear with a point of $\mathcal{C}$ . Consider the following element $A$ of $\overline{G}$:
\[
 A = \left( \begin{smallmatrix}
1 & 0 & 0 & 0 \\
1 & 1 & 0 & 1 \\
1 & 0 & 1 & 1\\
0 & 0 & 0 & 1 \end{smallmatrix} \right)\phi = B \phi.
\]
Clearly, $B\in J$, and so $A$ switches the reguli of $\mathcal{Q}^+$. Let us consider the action of $A$ on points of $W$.
\[
 (1,v,0, \gamma v) \left( \begin{smallmatrix}
1 & 0 & 0 & 0 \\
0 & 1 & 0 & 1 \\
1 & 0 & 1 & 1\\
0 & 0 & 0 & 1 \end{smallmatrix} \right)\phi 
 = (1, v, 0, v(\gamma^q +1) )\\
 = (1,v,0,\gamma v).
\]
Likewise,
\[
 (0,1,0, \gamma) \left( \begin{smallmatrix}
1 & 0 & 0 & 0 \\
0 & 1 & 0 & 1 \\
1 & 0 & 1 & 1\\
0 & 0 & 0 & 1 \end{smallmatrix} \right)\phi  = (0, 1, 0, 1 + \gamma)^\phi 
 = (0,1,0,\gamma).
\]
Therefore, $A$ fixes $W$ pointwise, and so is contained in $\overline{G}_P$ for all $P \in W$. But since $A$ switches the reguli of $\mathcal{Q}^+$, it cannot lie in $\PO(4,q)$, and so neither can $\overline{G}_P$ for all external points $P$ collinear with a point of the conic $\mathcal{C}$. Therefore, $G$ and $\overline{G}$ satisfy the conditions of Corollary \ref{Cosscoroll} and hence determine a relative hemisystem.

\end{proof}

\begin{remark}\label{remark:Cossidente}
We remark that although Cossidente describes one infinite family of relative hemisystems using his construction in \cite{MR3081646}, there are actually several more inequivalent infinite families of relative hemisystems that
arise from this construction. For $q=16$, we found by computer that there are five inequivalent relative hemisystems that admit $\mathrm{PSL}(2,q)$, and we conjecture that the number of inequivalent examples admitting this group increases with $q$.
\end{remark}

Cossidente's second family of relative hemisystems, admitting a group of order $q^2(q+1)$, for each $q$ even, described in \cite{cossidente2013new}, also satisfy the conditions of Corollary \ref{Cosscoroll}. To prove this, we first provide a concrete construction
of this family of relative hemisystems. As before, we consider the Hermitian space $\mathrm{H}(3,q^2)$ defined by the form $x_1x_2^{q} + x_2x_1^q +x_3x_4^q+x_4x_3^q$ over $\mathrm{GF}(q^2)$, with the 
embedded symplectic space $\mathrm{W}(3,q)$ defined as the restriction of the form to $\mathrm{GF}(q)$.
We explicitly define the following two hyperbolic quadrics.
\begin{equation}
\label{hypquad1}
 Q_1^+(3,q^2): \alpha x_1^2 + \beta x_2^2 + x_1x_2 + x_3x_4, 
 \end{equation}
 \begin{equation}
 \label{hypquad2}
 Q_2^+(3,q^2): \beta x_1^2 + \alpha x_2^2 + x_1x_2 + x_3x_4 
\end{equation}
where $\alpha \in \mathbb{F}_{q^2}$, $\beta = \alpha+1$ and $\alpha + \alpha^q + 1 = 0$. 
The reguli of $Q_1^+(3,q^2)$ are as follows:
\begin{align*}
\tilde{\mathscr{R}_1} & = \lbrace \left[ \begin{smallmatrix} \alpha&\alpha&0&\lambda\sqrt{\alpha} \\ \lambda\alpha&\lambda\beta&\sqrt{\alpha}&0 \\ \end{smallmatrix} \right] \mid \lambda  \in \mathrm{GF}(q^2) \rbrace \cup \lbrace \left[ \begin{smallmatrix}  0&0&0&1\\ \alpha & \beta & 0 & 0\\ \end{smallmatrix} \right]\rbrace\\
&\\
  \tilde{\mathscr{R}_2} & = \lbrace \left[ \begin{smallmatrix} \alpha&\alpha &\lambda\sqrt{\alpha}&0 \\ \lambda\alpha&\lambda \beta&0&\sqrt{\alpha} \\ \end{smallmatrix} \right] \mid \lambda  \in \mathrm{GF}(q^2) \rbrace \cup \lbrace \left[ \begin{smallmatrix}  0&0&1&0\\ \alpha & \beta & 0 & 0\\ \end{smallmatrix} \right]\rbrace.\\
  \end{align*}
  
\begin{theorem}
 Suppose $\overline{M}$ is the stabiliser in $\mathrm{PGU}(4,q)$ of the two hyperbolic quadrics $Q_1^+(3,q^2)$ and $Q_2^+(3,q^2)$ described above. Now, let $M$ be the stabiliser in $\overline{M}$ of a class
 of reguli in $Q_1^+(3,q^2)$.
Then $M$ is the group admitted by Cossidente's second family of relative hemisystems, and $M$ and $\overline{M}$ satisfy Corollary \ref{Cosscoroll}.
\end{theorem}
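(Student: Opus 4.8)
The plan is to check the two hypotheses of Corollary~\ref{Cosscoroll} for the pair $(M,\overline{M})$, working in the coordinates already fixed: $\mathrm{H}(3,q^2)$ with Gram matrix $H$, the hyperbolic quadrics $Q_1^+(3,q^2)$ and $Q_2^+(3,q^2)$ of Equations~\ref{hypquad1} and \ref{hypquad2}, and the two reguli $\tilde{\mathscr{R}_1},\tilde{\mathscr{R}_2}$ of $Q_1^+(3,q^2)$. \emph{Step 1 (the ambient orthogonal groups).} First I would confirm that each of $Q_1^+(3,q^2)$ and $Q_2^+(3,q^2)$ meets $\mathrm{H}(3,q^2)$ in an elliptic quadric isomorphic to $\mathrm{Q}^-(3,q)$ --- either by exhibiting a common tangent plane and invoking \cite{quadherm}, or by direct computation. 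By the discussion at the start of Section~\ref{magic}, $\mathrm{P}\Gamma\mathrm{U}(4,q)_{Q_1^+(3,q^2)}$ is then a copy of $\PSO(4,q)$ and its subgroup fixing each of the two classes of reguli of $Q_1^+(3,q^2)$ is the corresponding copy of $\PO(4,q)$. Since $\overline{M}\leqslant\mathrm{PGU}(4,q)\leqslant\mathrm{P}\Gamma\mathrm{U}(4,q)$ and $\overline{M}$ stabilises $Q_1^+(3,q^2)$, this gives $\overline{M}\leqslant\PSO(4,q)$, the first hypothesis of Corollary~\ref{Cosscoroll}.

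\emph{Step 2 (index two, and the identification with Cossidente's group).} Every element of $\PSO(4,q)$ either fixes both reguli classes of $Q_1^+(3,q^2)$ or interchanges them, so the subgroup of $\overline{M}$ fixing one reguli class equals $\overline{M}\cap\PO(4,q)$; by definition that subgroup is $M$, so $M=\overline{M}\cap\PO(4,q)$. To see $|\overline{M}{:}M|=2$ I would exhibit the involution $\rho$ induced by the coordinate transposition $x_3\leftrightarrow x_4$: its matrix satisfies the unitary condition for $H$, so $\rho\in\mathrm{PGU}(4,q)$; it fixes both $Q_1^+(3,q^2)$ and $Q_2^+(3,q^2)$ because the monomial $x_3x_4$ is symmetric in $x_3$ and $x_4$, so $\rho\in\overline{M}$; and, by the same calculation as in the proof for the Penttila--Williford family (where this matrix interchanges $\mathscr{R}_1$ and $\mathscr{R}_2$), $\rho$ interchanges $\tilde{\mathscr{R}_1}$ and $\tilde{\mathscr{R}_2}$. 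Hence $\rho\in\overline{M}\setminus M$, $|\overline{M}{:}M|=2$ and $\overline{M}=M\langle\rho\rangle$. To match $M$ with Cossidente's group I would compute $|M|=q^2(q+1)$ (Orbit--Stabiliser applied to the $\overline{M}$-orbit of the configuration $Q_1^+(3,q^2)\cap Q_2^+(3,q^2)$) and check that $M$ fixes a point $P$ of the ovoid $\mathrm{Q}^-(3,q)=Q_1^+(3,q^2)\cap\mathrm{H}(3,q^2)$, so that $M$ is precisely the subgroup of $\PO(4,q)_P$ of order $q^2(q+1)$ used in \cite{cossidente2013new}.

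\emph{Step 3 (the stabiliser condition --- the main obstacle).} It remains to prove that $\overline{M}_P$ is not contained in $\PO(4,q)$ for every external point $P$. Since $\overline{M}=M\langle\rho\rangle$ and $M\leqslant\PO(4,q)$, we have $\overline{M}_P\not\leqslant\PO(4,q)$ precisely when some element of the coset $M\rho$ fixes $P$, that is, precisely when $P^{\rho}\in P^{M}$; equivalently, every $M$-orbit on $\mathcal{P}_E$ is $\rho$-invariant, and by normality of $M$ in $\overline{M}$ this can be tested on one representative of each orbit. The plan is then to mimic the analysis used for the $\mathrm{PSL}(2,q)$-family: using $|M|=q^2(q+1)$ and the Orbit--Stabiliser theorem, partition $\mathcal{P}_E$ into $M$-orbits by intersecting it with a small number of carefully chosen lines (through the fixed point $P$ and through points carrying large $M$-stabilisers), and for each orbit representative $Q$ write down an explicit $B\in M$ with $Q^B=Q^{\rho}$ --- equivalently a matrix in $\overline{M}\setminus M=M\rho$ fixing $Q$. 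I expect this orbit computation to be the bulk of the argument and the only genuine obstacle: relative to the $\mathrm{PSL}(2,q)$ case the bookkeeping is heavier, because $M$ is soluble of Borel type (more orbit lengths to track) and the coefficients $\alpha,\beta$ of $Q_1^+(3,q^2)$ lie in $\mathrm{GF}(q^2)\setminus\mathrm{GF}(q)$, but each step is a finite linear-algebra computation over $\mathrm{GF}(q^2)$ of the kind already carried out. Once the condition is verified, Corollary~\ref{Cosscoroll} shows that $(M,\overline{M})$ satisfies the hypotheses of Theorem~\ref{majortheorem}, so $\bigcup_{i=1}^{n}\ell_i^{M}$ is a relative hemisystem; being invariant under $M$, it is Cossidente's second family.
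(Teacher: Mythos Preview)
Your plan is sound, and Steps~1 and 2 match the paper's setup closely; in fact your choice of the linear involution $\rho$ (swapping $x_3$ and $x_4$) is more consistent with the hypothesis $\overline{M}\leqslant\mathrm{PGU}(4,q)$ than the paper's semilinear involution $z\colon(x_1,x_2,x_3,x_4)\mapsto(x_2^{q},x_1^{q},x_4^{q},x_3^{q})$.

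The substantive divergence is in Step~3. You propose to mimic the $\mathrm{PSL}(2,q)$ argument: decompose $\mathcal{P}_E$ explicitly into $M$-orbits and, on each orbit representative, exhibit an element of $\overline{M}\setminus M$ fixing it. This is correct in principle, and you are right that it is the heaviest part. The paper, however, avoids the orbit decomposition entirely. It exploits the fact that the ambient group $D_1\cong\PO(4,q)$ (the stabiliser of the reguli of $Q_1^{+}$) acts transitively on $\mathcal{P}_E$, so that the condition $\overline{M}_P\not\leqslant\PO(4,q)$ need only be verified at a \emph{single} external point $P_0=(1,0,1,0)$: the paper produces one explicit involution $m\in M$ with $P_0^{z}=P_0^{m}$, checks that $m$ preserves both quadrics and both reguli classes, and then argues by a conjugacy and Orbit--Stabiliser count that this suffices for every $P\in\mathcal{P}_E$. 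The trade-off is clear: your route is more laborious but entirely self-contained, while the paper's route is dramatically shorter---one matrix computation in place of a full orbit analysis---at the cost of relying on the transitivity of $D_1$ and a somewhat compressed conjugacy argument to pass from $P_0$ to an arbitrary external point. If you can see why the single-point check transports along $D_1$, that shortcut is worth taking; otherwise your direct orbit computation will get there, just with more bookkeeping.
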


\begin{proof}
Firstly, we claim that  $\overline{M} = M :Z$, where $Z$ is the group generated by the involution $z$ defined by $(x_1, x_2, x_3, x_4) \mapsto (x_2^q, x_1^q, x_4^q, x_3^q)$. Firstly, notice that $|\overline{M} : M| = 2$, so $M$ is a normal subgroup of $\overline{M}$.
The action on lines of $\tilde{\mathscr{R}}_1$ (for instance) is permutation isomorphic to the action on a projective line $\mathrm{PG}(1,q^2)$. Now, $M$ fixes two
lines, say $\ell_1$ and $\ell_2$, which are in the intersection of $Q_1^+(3,q^2)$ and $Q_2^+(3,q^2)$, with $\ell_1 \in \tilde{\mathscr{R}}_1$ and $\ell_2 \in \tilde{\mathscr{R}}_2$.
The group $M$ fixing $\ell_1$ is permutation isomorphic to $\mathrm{AGL}(1,q^2)$ fixing a point on $\mathrm{PG}(1,q^2)$. Now, $\mathrm{AGL}(1,q^2)$ is transitive on the remaining points of $\mathrm{PG}(1,q^2)$ \cite[\textsection 7.7]{MR1409812},
and so $M$ is transitive on $\tilde{\mathscr{R}_1} \setminus \{ \ell_1\}$. Therefore, to show that $z$ maps $\tilde{\mathscr{R}}_1$ to $\tilde{\mathscr{R}}_2$, 
it is sufficient to prove it for $\ell_1$ and another line of $\tilde{\mathscr{R}}_1$. Let $\ell_{\infty , 1} = \left[ \begin{smallmatrix}  0&0&0&1\\ \alpha & \beta & 0 & 0\\ \end{smallmatrix} \right]$.
Now, 
\[
\ell_1^z  = \left[ \begin{smallmatrix} 1 & 1 &0&0 \\ 0&0&1&0\\ \end{smallmatrix} \right]^q \left( \begin{smallmatrix}
0 & 1 & 0 & 0 \\
1 & 0 & 0 & 0 \\
0 & 0 & 0 & 1\\
0 & 0 & 1 & 0 \end{smallmatrix} \right)
= \left[ \begin{smallmatrix} 1 & 1 &0&0 \\ 0&0&0&1\\ \end{smallmatrix} \right] \in \tilde{\mathscr{R}}_2,
\]
\[
 \ell_{\infty , 1}^z  = \left[ \begin{smallmatrix}  0&0&0&1\\ \alpha & \beta & 0 & 0\\ \end{smallmatrix} \right]^q \left( \begin{smallmatrix}
0 & 1 & 0 & 0 \\
1 & 0 & 0 & 0 \\
0 & 0 & 0 & 1\\
0 & 0 & 1 & 0 \end{smallmatrix} \right)
= \left[ \begin{smallmatrix}  0&0&1&0\\ \alpha & \beta & 0 & 0\\ \end{smallmatrix} \right] \in \tilde{\mathscr{R}}_2.
\]
A similar argument yields that the image of any line in $\tilde{\mathscr{R}}_2$ under $z$ is contained in $\tilde{\mathscr{R}}_1$.
Therefore, the involution $z$ switches the reguli of $Q_1^+(3,q^2)$ and so $M \cap Z = 1$. Since $z$ stabilises the two hyperbolic quadrics $Q_1^+$ and $Q_2^+$, we have $Z \leqslant M$, and therefore $\overline{M}= M :Z$.
 Let $D_1$ be the subgroup of $\mathrm{P} \Gamma \mathrm{U}(4,q)_{Q_1^+}$ that stabilises two families of reguli on $Q_1^+$. 
 Similarly, let $D_2$ be the subgroup of $\mathrm{P} \Gamma \mathrm{U}(4,q)_{Q_2^+}$ that stabilises two families of reguli on $Q_2^+$. 
We now prove that the orbits of $\overline{M}$ and $M$ on external points are identical.
Since $\overline{M} = M :Z$, it is sufficient to prove that for all $x \in \mathcal{P}_E, x^z \in x^M$.
Since $D_1$ is transitive on $\mathcal{P}_E$, this is equivalent to showing that
for all $g\in D_1$, we have $(P_0^g)^z \in (P_0^g)^M$ for all $P_0 \in \mathcal{P}_E$.

We claim that given a point $P_0 \in \mathcal{P}_E$, we have $P_0^z = P_0^m$ and $P_0 = (P_0^z)^z = (P_0^m)^m$ for some $m \in M$.
Since $D_1$ acts transitively on $\mathcal{P}_E$, it is sufficient for us to prove this claim for a specific point.
Let $P_0 = (1,0,1,0)$. Then we have the following
$$ P_0^z = (1,0,1,0)^q 
\left( \begin{smallmatrix}
0 & 1 & 0 & 0 \\
1 & 0 & 0 & 0 \\
0 & 0 & 0 & 1\\
0 & 0 & 1 & 0 \end{smallmatrix} \right) = (0,1,0,1).$$
Now, we must find an involution $m \in M$ such that $P_0^m = P_0^z = (0,1,0,1)$. This is equivalent to finding $m \in M$ such that $P_0^{zm^{-1}} = P_0^{zm}=P_0$. Take $m$ to be the following collineation:
\[
m = \phi \left( \begin{smallmatrix}
1 & 0 & 1 & 1 \\
0 & 1 & 1 & 1 \\
1 & 1 & 1 & 0\\
1 & 1 & 0 & 1 \end{smallmatrix} \right)
\]
where $\phi$ is the automorphism $x \mapsto x^q$. This collineation has the required property and so it only remains to show that $m \in M$. 

To show that $m \in M$, we must show that it is an involution and that it fixes the two hyperbolic quadrics that define $M$ and also each of the reguli in the intersection of the two hyperbolic quadrics.
Firstly recalling that we are working with a field with characteristic 2,
\[
 m^2 =  \left( \begin{smallmatrix}
1 & 0 & 1 & 1 \\
0 & 1 & 1 & 1 \\
1 & 1 & 1 & 0\\
1 & 1 & 0 & 1 \end{smallmatrix} \right) \left( \begin{smallmatrix}
1 & 0 & 1 & 1 \\
0 & 1 & 1 & 1 \\
1 & 1 & 1 & 0\\
1 & 1 & 0 & 1 \end{smallmatrix} \right) = I_4.
\]
Secondly, we must show that $m$ fixes the two hyperbolic quadrics $Q^+_1(3,q^2)$ and $Q_2^+(3,q^2)$.
Recall the defining quadratic forms of the two hyperbolic quadrics $Q_1^+(3,q^2)$ and $Q_2^+(3,q^2)$ given in Equations \ref{hypquad1} and \ref{hypquad2} respectively. 

The image of any point in $Q_1^+(3,q^2)$ under $m$ lies in $Q_1^+(3,q^2)$, so
$m$ fixes $Q^+_1(3,q^2)$, and by symmetry, it fixes $Q_2^+(3,q^2)$ as well.
Finally, we show that $m$ fixes the reguli of $Q_1^+(3,q^2)$. Again, we only need to test two lines from each regulus -- the line that is fixed by $M$ and another line.
For $\ell_1, \ell_{\infty , 1} \in \tilde{\mathscr{R}}_1$:
\[
\ell_1^m = \left[ \begin{smallmatrix} 1 & 1 &0&0 \\ 0&0&1&0\\ \end{smallmatrix} \right]^q
 \left( \begin{smallmatrix}
1 & 0 & 1 & 1 \\
0 & 1 & 1 & 1 \\
1 & 1 & 1 & 0\\
1 & 1 & 0 & 1 \end{smallmatrix} \right)
=  \left[ \begin{smallmatrix} 1 & 1 &0&0 \\ 1&1&1&0\\ \end{smallmatrix} \right] = \left[ \begin{smallmatrix} 1 & 1 &0&0 \\ 0&0&1&0\\ \end{smallmatrix} \right]\in \tilde{\mathscr{R}}_1,
\]
\[
\ell_{\infty , 1}^m  =  \left[ \begin{smallmatrix}  0&0&0&1\\ \alpha & \beta & 0 & 0\\ \end{smallmatrix} \right]^q  \left( \begin{smallmatrix}
1 & 0 & 1 & 1 \\
0 & 1 & 1 & 1 \\
1 & 1 & 1 & 0\\
1 & 1 & 0 & 1 \end{smallmatrix} \right) 
= \left[ \begin{smallmatrix}  1&1&0&1\\ \beta & \alpha & 0 & 0\\ \end{smallmatrix} \right]
=  \left[ \begin{smallmatrix}  \sqrt{\alpha}&\sqrt{\alpha}&0&\sqrt{\alpha}\\ \alpha & \beta & 1 & 0\\ \end{smallmatrix} \right] \in \tilde{\mathscr{R}}_1.
\]
Using a similar argument, $m$ fixes the reguli of $\tilde{\mathscr{R}}_2$ as well.
Therefore, $m$ preserves the reguli and so $m\in M$.

Now that we have proved the claim, we continue with the proof as before. We have $P_0^{gz} \in P_0^{gM}$ if and only if $P_0^{gzg^{-1}} \in P_0^{gMg^{-1}}$ for all $g \in D_1$. 
This is equivalent to $P_0$ having identical orbits under $\overline{M}^g$ and $M^g$ for all $g \in D_1$. By the Orbit-Stabiliser Theorem and since $|\overline{M} {:} M| = 2$, it follows that $| \overline{M}^g_{P_0}| = 2|M^g_{P_0}|$.
Finally, this holds if and only if $|\overline{M}_{P_0}| = 2|M_{P_0}|$, which is true by the claim proven above. Since $M_{P_0} = \overline{M}_{P_0} \cap \mathrm{P}\Omega^-(4,q)$, we have shown that $\overline{M}_{P_0} \nsubseteq \mathrm{P}\Omega^-(4,q)$ and 
since $D_1$ is transitive, this holds for all external points $P\in \mathcal{P}_E$.
Therefore, by Corollary \ref{Cosscoroll}, $M$ and $\overline{M}$ determine a relative hemisystem.
 
\end{proof}

Interestingly, we have found by computation in GAP \index{GAP}\cite{GAP4} that the relative hemisystem on $\mathrm{H}(3,q^2)$ arising from a Suzuki-Tits ovoid does not satisfy the criteria for Theorem \ref{majortheorem}.

We leave as an open question whether there are more relative hemisystems on $\mathrm{H}(3,q^2)$ fitting the criteria given in Theorem \ref{majortheorem}.

\section{A classification of the relative hemisystems on $\mathrm{H}(3,4^2)$}
Using a function written in GAP \cite{GAP4} and interfacing with Gurobi \cite{gurobi}, we were able to easily enumerate all of the relative hemisystems on the Hermitian space $\mathrm{H}(3,4^2)$. This result was previously unknown, or at least unmentioned in the literature.
\begin{proposition}
 There are 240 examples of relative hemisystems on the Hermitian space $\mathrm{H}(3,4^2)$, all of which are equivalent to the Penttila-Williford example on that Hermitian space.
\end{proposition}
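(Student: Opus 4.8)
The plan is to reduce the statement to a finite search followed by one orbit computation. First I would set up the explicit model of Section~\ref{newproofs}: take $\mathrm{H}(3,4^2)$ to be the Hermitian surface defined by $x_1x_2^{q}+x_2x_1^{q}+x_3x_4^{q}+x_4x_3^{q}$ over $\mathrm{GF}(16)$, with $\mathrm{W}(3,4)$ its restriction to $\mathrm{GF}(4)$. Listing the points and lines of $\mathrm{H}(3,4^2)$ and removing those that meet $\mathrm{W}(3,4)$ produces $\mathcal{P}_E$ and $\mathcal{L}_E$; a direct count gives $|\mathcal{P}_E| = q(q^2-1)(q^2+1) = 1020$ and, since every point of a line disjoint from $\mathrm{W}(3,4)$ is external, $|\mathcal{L}_E| = |\mathcal{P}_E|\, q/(q^2+1) = 240$, with each external point on exactly $q=4$ external lines (so each relative hemisystem has $1020\cdot 2/17 = 120$ lines).

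Next I would encode ``relative hemisystem'' as a $0$/$1$ linear system: a variable $x_\ell\in\{0,1\}$ for each $\ell\in\mathcal{L}_E$, and for each $P\in\mathcal{P}_E$ the equation $\sum_{\ell\ni P}x_\ell = q/2 = 2$. By the definition in the Introduction, the subsets of $\mathcal{L}_E$ whose indicator vectors satisfy all $1020$ equations are exactly the relative hemisystems. One builds the incidence structure in GAP, passes the integer program to Gurobi, and enumerates \emph{all} feasible solutions; the search returns precisely $240$ of them, which is the count. The step I expect to be the main obstacle is making this enumeration provably exhaustive rather than a mere sampling of solutions: the solver must be run in an all-solutions mode --- re-solving after appending a no-good/exclusion constraint for each solution found, or using a checked solution-pool configuration --- and the run must terminate with the feasible region certified empty. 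One should also verify that the incidence matrix handed to Gurobi is genuinely that of $(\mathcal{P}_E,\mathcal{L}_E)$, so that the ILP faithfully captures the definition.

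Finally, to show that all $240$ examples are equivalent to a Penttila--Williford hemisystem, I would compute the stabiliser $\overline{H}$ of $\mathrm{W}(3,4)$ in $\mathrm{P}\Gamma\mathrm{U}(4,4)$ --- the natural group of equivalences of relative hemisystems of $\mathrm{H}(3,4^2)$ with respect to $\mathrm{W}(3,4)$ --- and let it act on the set of $240$ hemisystems produced above. It then suffices to check, by a GAP orbit computation, that this action is transitive and that the orbit contains one of the two $\PO(4,4)$-orbits on $\mathcal{L}_E$, which by Section~\ref{PWRHs} are the Penttila--Williford hemisystems on $\mathrm{H}(3,4^2)$. Transitivity forces every member of the list into the $\overline{H}$-orbit of that example, which is precisely the assertion that all $240$ relative hemisystems are equivalent to the Penttila--Williford one.
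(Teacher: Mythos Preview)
Your proposal is correct and follows essentially the same approach as the paper: an exhaustive computer enumeration in GAP, interfaced with Gurobi, of all solutions to the relative-hemisystem constraints on $\mathrm{H}(3,4^2)$, followed by an equivalence check. In fact you supply considerably more implementation detail (the explicit ILP encoding, the no-good cuts for exhaustiveness, the orbit computation under the stabiliser of $\mathrm{W}(3,4)$) than the paper itself, which simply reports the outcome of the computation.
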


Unfortunately, using the same approach does not allow us to classify all of the relative hemisystems on $\mathrm{H}(3, q^2)$ for $q \geqslant 8$ because the numbers of external points and external lines are significantly larger, making the computational problem much harder to solve. However, we were able to enumerate all of the relative hemisystems on $\mathrm{H}(3,8^2)$ with certain symmetry hypotheses.

We generated a computation tree using a branching technique outlined in \cite{Martis}, which essentially uses a partially ordered set of orbit representatives on $k$-tuples (closed under taking subsets) to decrease the search space.
This reduced the number of equivalent relative hemisystems found during computation, and made the search for relative hemisystems significantly more efficient.
\begin{proposition}
A relative hemisystem of $\mathrm{H}(3,8^2)$ is equivalent to one of the four known examples, or it has a trivial stabiliser.
\end{proposition}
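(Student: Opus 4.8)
The plan is to reduce the classification to a finite collection of computer-assisted searches, one for each conjugacy class of elements of prime order in the collineation group $\mathrm{P}\Gamma\mathrm{U}(4,8)$ of $\mathrm{H}(3,8^2)$. If a relative hemisystem $\mathcal{R}$ has non-trivial stabiliser in $\mathrm{P}\Gamma\mathrm{U}(4,8)$, then by Cauchy's theorem that stabiliser contains an element $g$ of prime order $p$ with $p \mid |\mathrm{P}\Gamma\mathrm{U}(4,8)|$. First I would enumerate the primes $p$ dividing this order and, for each, list representatives of the conjugacy classes of elements of order $p$, together with their induced permutations on the external points $\mathcal{P}_E$ and external lines $\mathcal{L}_E$. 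Since relative hemisystems are preserved by the full collineation group, it suffices to search for those fixed setwise by one representative $g$ from each such class.

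Next, for each such $g$, I would compute the orbits of $\langle g\rangle$ on $\mathcal{L}_E$; a $\langle g\rangle$-invariant relative hemisystem is a union of these orbits, so the defining condition that every external point lies on exactly $q/2 = 4$ external lines of $\mathcal{R}$ becomes a much smaller system of integer constraints indexed by the $\langle g\rangle$-orbits on $\mathcal{L}_E$ rather than by $\mathcal{L}_E$ itself. I would feed each collapsed instance to the \texttt{GAP}--\texttt{Gurobi} pipeline \cite{GAP4,gurobi} already used for $\mathrm{H}(3,4^2)$, combined with the branching and poset-of-orbit-representatives pruning of \cite{Martis} to avoid regenerating equivalent solutions. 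For every solution produced I would compute its full stabiliser in $\mathrm{P}\Gamma\mathrm{U}(4,8)$ and test it for equivalence against the four known examples on $\mathrm{H}(3,8^2)$ -- the Penttila--Williford family, Cossidente's $\mathrm{PSL}(2,8)$ family, Cossidente's family of order $q^2(q+1)$, and the Cossidente--Pavese Suzuki--Tits example on $\mathrm{H}(3,64)$ -- each of which indeed has non-trivial stabiliser and so is expected to appear in the appropriate searches.

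The conclusion is then immediate: the searches show that every relative hemisystem of $\mathrm{H}(3,8^2)$ admitting a non-trivial collineation is equivalent to one of these four, and any relative hemisystem not so equivalent must, tautologically, have trivial stabiliser. The main obstacle is computational feasibility. Even after quotienting by a prime-order group, the instances for small primes -- above all $p = 2$, where there are many conjugacy classes and the $\langle g\rangle$-orbits on $\mathcal{L}_E$ remain large -- sit close to the practical limit of the integer-programming solver, so most of the work lies in choosing the prime-order representatives and the branching order, and in exploiting the poset of orbit representatives on $k$-tuples, carefully enough that each instance becomes tractable. A secondary point demanding care is verifying that the list of conjugacy classes of prime-order elements is complete and correctly accounting for field automorphisms, since $\mathrm{P}\Gamma\mathrm{U}(4,8)$ properly contains $\mathrm{PGU}(4,8)$.
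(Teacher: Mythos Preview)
Your proposal is correct and matches the paper's approach: the paper states this proposition as a computational result obtained by enumerating relative hemisystems of $\mathrm{H}(3,8^2)$ ``with certain symmetry hypotheses'' using \texttt{GAP}/\texttt{Gurobi} together with the branching technique of \cite{Martis}, and your plan---iterating over conjugacy classes of prime-order elements to impose a non-trivial stabiliser and then solving the collapsed integer programs---is precisely a concrete implementation of that strategy. The paper gives fewer details than you do (it does not spell out the prime-order reduction explicitly), but the method and the tools invoked are the same.
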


\section*{Acknowledgements}
The authors would like to express their thanks to Prof. Gordon Royle for his assistance in computation for this paper, and Dr. Angela Aguglia for her insight into intersections of hyperbolic
quadrics with Hermitian spaces. The first author acknowledges the support of the Australian
Research Council Future Fellowship FT120100036. The second author acknowledges the support of a Hackett Postgraduate Research Scholarship. The third author acknowledges the support of the
Australian Research Council Discovery Grant DP120101336.

%\section*{References}
\nocite{fining}
\bibliographystyle{abbrv}
\bibliography{paperbib}
\vspace{1cm}
Contact details:
\end{document}